\newtheorem{lemma}{Lemma}
\newtheorem{thm}{Theorem}
\newtheorem{corollary}{Corollary}
\numberwithin{equation}{section}
\begin{document}

\leftline{ \scriptsize \it}
\title[]
{Approximation by Complex Sz\'{a}sz-Durrmeyer-Chlodowsky Operators in compact disks}
\maketitle

\begin{center}
{\bf Meenu Goyal$^1,$ and P. N. Agrawal$^2$}
\vskip0.2in
$^{1,2}$Department of Mathematics\\
Indian Institute of Technology Roorkee\\
Roorkee-247667, India\\

$^1$meenu.goyaliitr@yahoo.com
$^2$pna\_{}iitr@yahoo.co.in

\end{center}

\begin{abstract}
In the present article, we deal with the overconvergence of the Sz\'{a}sz-Durrmeyer-Chlodowsky operators. Here we study the approximation properties e.g. upper estimates, Voronovskaja type result for these operators attached to analytic functions in compact disks. Also, we discuss the exact order in simultaneous approximation by these operators and its derivatives and the asymptotic result with quantitative upper estimate. In such a way, we put in evidence the overconvergence phenomenon for the Sz\'{a}sz-Durrmeyer-Chlodowsky operators, namely the extensions of approximation properties with exact quantitative estimates and orders of these convergencies to sets in the complex plane that contain the interval $[0,\infty)$.\\
Keywords: Approximation in compact disks, Voronovskaja theorem, simultaneous approximation.\\
Mathematics Subject Classification(2010): 30E10, 41A25, 41A28.
\end{abstract}

\section{introduction}
For the convergence of the Bernstein polynomials in the complex plane, Bernstein \cite{GGL} proved that if $f : S\rightarrow \mathbb{C}$ is analytic in the open set $S \subset \mathbb{C},$ with $\overline{\mathbb{D}}_1\subset S$ (with $\overline{\mathbb{D}}_1 =\{z\in \mathbb{C} : |z| \leq 1\}),$ then the complex Bernstein polynomials $\mathcal{B}_{n}(f;z)=\displaystyle\sum_{k=0}^{n} {n\choose k} z^k (1-z)^{n-k} f\left(\frac{k}{n}\right),$ uniformly converges to $f$ in $\overline{\mathbb{D}}_1.$ In 1986, Lorentz \cite{GGL} was the first person who studied some approximation properties of complex Bernstein polynomials in compact disks. Nowadays, approximation by complex linear operators has become an interesting and important topic of research. A Voronovskaja type result with quantitative estimate for complex Bernstein polynomials in compact disks was studied by Gal \cite{GAL2}. After that, similar results for different operators e.g. complex Bernstein-Kantorovich, Bernstein-Stancu, complex Bernstein-Schurer and Kantorovich-Schurer polynomials, Bernstein-Durrmeyer, complex genuine Durrmeyer-Stancu, complex Baskakov-Stancu operators, complex Beta operators of first kind, Sz$\acute{a}$sz-Durrmeyer operators etc. have been discussed in literature (see \cite{GAA}, \cite{GAL10}-\cite{GAL11}, \cite{GAL1}-\cite{VRP}). In \cite{NI}, Ispir introduced the modified complex Sz\'{a}sz-Mirakjan operators and gave the order of approximation, Voronovskaja-type theorem and exact orders of approximation.
For a real function of real variable $f : [0,\infty)\rightarrow \mathbb{R},$ \.{I}zg\.{i} \cite{AI} introduced the following composition of Sz\'{a}sz-Mirakjan operators by taking the weight function of Chlodowsky-Durrmeyer operators on $C[0,\infty)$ as
\begin{eqnarray}
\left( \mathcal{F}_{n}f\right) (x) &=&\frac{n+1}{b_n}\sum_{k=0}^{\infty}p_{n,k}\left(\frac{x}{b_n}\right)\int_{0}^{b_n} \phi_{n,k}\left(\frac{t}{b_n}\right) f(t) dt, \, 0\leq x\leq b_n \label{eq1},
\end{eqnarray}
where $p_{n,k}(x)= e^{-nx}\dfrac{(nx)^k}{k!}, \, \phi_{n,k}(t)= \displaystyle{n\choose k} t^k (1-t)^{n-k}$ and $b_n$ is a sequence of positive real numbers which satisfy $\displaystyle\lim_{n\rightarrow \infty} b_n = \infty, \, \displaystyle\lim_{n\rightarrow \infty} \frac{b_n}{n}=0.$


In the present paper, we extend some overconvergence properties of the Sz\'{a}sz-Durrmeyer-Chlodowsky operators to complex domain. The complex Sz\'{a}sz-Durrmeyer-Chlodowsky operators are obtained from the real version, simply by replacing the real variable $x$ by the complex variable $z$ in the operators defined by (\ref{eq1}), which is given below:
\begin{eqnarray}
\left( \mathcal{F}_{n}f\right) (z) &=&\frac{n+1}{b_n}\sum_{k=0}^{\infty}p_{n,k}\left(\frac{z}{b_n}\right)\int_{0}^{b_n} \phi_{n,k}\left(\frac{t}{b_n}\right) f(t) dt, \label{eq2}
\end{eqnarray}
where $z\in \mathbb{C}$ is such that $0\leq Re(z)\leq b_n.$\\

Throughout the paper, we consider $\mathbb{D}_{R}:=\{z\in \mathbb{C}:|z|<R\},\, R>1.$ By $H_R,$ we mean the class of all functions satisfying: $f: [R,b_n]\cup  \mathbb{\overline D}_R\rightarrow \mathbb{C}$ is continuous in $[R,b_n]\cup  \mathbb{\overline D}_R,$ analytic in $\mathbb{D}_R$ i.e. $f(z)=\displaystyle \sum_{p=0}^\infty c_p z^p$ for all $z\in \mathbb{D}_R.$ Let $1\leq r< R$ and $\parallel f\parallel_r=\displaystyle\sup_{|z|\leq r}|f(z)|.$
In this paper, we present rate of convergence and the Voronovskaja type result for the Sz\'{a}sz-Durrmeyer-Chlodowsky operators $ \mathcal{F}_{n}(f;z)$ for analytic functions on compact disks and also study the exact order of approximation for these operators.

\section{Auxiliary results}
In order to obtain the main results, we first prove basic lemmas:
\begin{lemma}\label{l1}
Denoting $e_p(z)=z^p$ and $\Pi_{n,p}(z)= \mathcal{F}_{n}(e_p;z),$ for all $e_p=t^p,\, p\in \mathbb{N}\cup\{0\}, n\in \mathbb{N}$ and $z\in \mathbb{C},$ we have $\mathcal{F}_{n}(e_0;z)=1$ and
\begin{eqnarray*}
\Pi_{n,p+1}(z)= \frac{b_n z}{n+p+2}\, \Pi^{\prime}_{n,p}(z)+\frac{nz+(p+1)b_n}{n+p+2}\, \Pi_{n,p}(z).
\end{eqnarray*}
Also, $\Pi_{n,p}(z)$ is a polynomial of degree $p.$
\end{lemma}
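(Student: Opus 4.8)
The plan is to reduce everything to two elementary identities and then assemble them. Writing $x=z/b_n$ and abbreviating the integral moments as $I_{k,p}=\int_0^{b_n}\phi_{n,k}(t/b_n)\,t^p\,dt$, we have $\Pi_{n,p}(z)=\frac{n+1}{b_n}\sum_{k=0}^{\infty}p_{n,k}(x)\,I_{k,p}$. The first identity I would establish is the normalization: substituting $u=t/b_n$ and using the Beta integral gives $I_{k,0}=b_n\binom{n}{k}B(k+1,n-k+1)=b_n/(n+1)$, independent of $k$, so that $\Pi_{n,0}(z)=\sum_{k}p_{n,k}(x)=1$ by the Poisson normalization $\sum_{k}p_{n,k}=1$. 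This settles $\mathcal{F}_{n}(e_0;z)=1$.

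For the recurrence I would first record the moment ratio. The same Beta-integral computation gives $I_{k,p}=b_n^{p+1}\binom{n}{k}B(k+p+1,n-k+1)$, and dividing consecutive moments collapses the Gamma factors to $I_{k,p+1}=\frac{b_n(k+p+1)}{n+p+2}\,I_{k,p}$. Second, I would use the Sz\'asz differential identity $x\,p'_{n,k}(x)=(k-nx)\,p_{n,k}(x)$, which follows at once from $\frac{d}{dx}\log p_{n,k}(x)=\tfrac{k}{x}-n$. Translating the $x$-derivative back to the $z$-derivative via $\frac{d}{dz}=\frac{1}{b_n}\frac{d}{dx}$ turns this into $z\,p'_{n,k}(z/b_n)=(b_n k-nz)\,p_{n,k}(z/b_n)$, which lets me express the $k$-weighted sum $\Sigma_1:=\frac{n+1}{b_n}\sum_{k}k\,p_{n,k}(x)\,I_{k,p}$ in closed form as $\Sigma_1=z\,\Pi'_{n,p}(z)+\frac{nz}{b_n}\,\Pi_{n,p}(z)$.

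With both identities in hand the recurrence falls out. Applying the moment ratio inside the defining series gives $\Pi_{n,p+1}(z)=\frac{b_n}{n+p+2}\bigl[\Sigma_1+(p+1)\Pi_{n,p}(z)\bigr]$, after splitting $(k+p+1)=k+(p+1)$ and recognizing the unweighted sum as $\Pi_{n,p}(z)$. Substituting the closed form for $\Sigma_1$ and simplifying produces exactly $\Pi_{n,p+1}(z)=\frac{b_n z}{n+p+2}\Pi'_{n,p}(z)+\frac{nz+(p+1)b_n}{n+p+2}\Pi_{n,p}(z)$. Finally, the degree claim is an immediate induction on $p$: $\Pi_{n,0}=1$ has degree $0$, and if $\Pi_{n,p}$ has degree $p$ with positive leading coefficient $a_p$, then $b_n z\,\Pi'_{n,p}$ has degree $p$ while $nz\,\Pi_{n,p}$ contributes the top term $\frac{n a_p}{n+p+2}z^{p+1}$, whose coefficient is again positive, so $\Pi_{n,p+1}$ has degree exactly $p+1$.

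The step I expect to require the most care is the bookkeeping of the chain-rule factor $b_n$ when passing between the $x$- and $z$-derivatives, since it is easy to misplace a power of $b_n$ and spoil the coefficients in the stated recurrence; a minor preliminary point is to justify termwise differentiation of the infinite series, which is harmless because $|p_{n,k}(z/b_n)|$ decays factorially in $k$ while $I_{k,p}$ grows only polynomially, so the series converges locally uniformly and defines an entire function of $z$.
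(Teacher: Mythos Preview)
Your argument is correct, but it proceeds along a different line from the paper. The paper never evaluates the integrals $I_{k,p}$ explicitly. Instead it starts from $b_n z\,\Pi'_{n,p}(z)$, uses the same Sz\'asz kernel identity $b_n z\,\tfrac{d}{dz}p_{n,k}(z/b_n)=(kb_n-nz)p_{n,k}(z/b_n)$ that you use, and then rewrites the factor $kb_n-nz$ in the form $(k+1)b_n-(n+1)t+(n+1)t-b_n-nz$, introducing the integration variable $t$. This lets the paper invoke a second differential identity, $(b_n-t)\bigl(t\,\phi_{n,k}(t/b_n)\bigr)'=\{(k+1)b_n-(n+1)t\}\phi_{n,k}(t/b_n)$, and an integration by parts then produces $\Pi_{n,p+1}$ and $\Pi_{n,p}$ with the stated coefficients. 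Your route---computing the Beta integral to obtain the one--line moment ratio $I_{k,p+1}=\tfrac{b_n(k+p+1)}{n+p+2}\,I_{k,p}$ and splitting $k+(p+1)$---is shorter and more transparent here, because the Durrmeyer kernel $\phi_{n,k}$ happens to have explicit polynomial moments. The paper's integration-by-parts scheme, on the other hand, is the standard template that carries over verbatim to Durrmeyer-type operators whose weight functions do not admit such a clean closed form, which is presumably why the authors chose it. Your induction for the degree claim, tracking the leading coefficient $a_{p+1}=\tfrac{n}{n+p+2}a_p>0$, is slightly sharper than the paper's, which merely asserts the degree statement ``by mathematical induction'' without spelling out that the top coefficient never vanishes.
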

\begin{proof}
Using $b_n z \,\,p_{n,k}^{\prime}\left(\dfrac{z}{b_n}\right)= (k b_n-nz)\,\,p_{n,k}\left(\dfrac{z}{b_n}\right),$ we have
\begin{eqnarray*}
b_n z\,\, \Pi^{\prime}_{n,p}(z) &=&\frac{n+1}{b_n} \sum_{k=0}^{\infty} b_n z \,\,p^{\prime}_{n,k}\left(\dfrac{z}{b_n}\right) \int_0^{b_n} \phi_{n,k}\left(\dfrac{t}{b_n}\right) t^p \,dt\\
&=& \frac{n+1}{b_n} \sum_{k=0}^{\infty} (k b_n-nz)\,\,p_{n,k}\left(\dfrac{z}{b_n}\right) \int_0^{b_n} \phi_{n,k}\left(\dfrac{t}{b_n}\right) t^p \,dt\\
&=&\frac{n+1}{b_n} \sum_{k=0}^{\infty} p_{n,k}\left(\dfrac{z}{b_n}\right) \int_0^{b_n} \{(k+1) b_n-(n+1)t+(n+1)t-b_n-nz)\}\,\,\phi_{n,k}\left(\dfrac{t}{b_n}\right) t^p \,dt.
\end{eqnarray*}
Using the identity $(b_n-t)\, \left(t\phi_{n,k}\left(\dfrac{t}{b_n}\right)\right)^{\prime} =\{(k+1)b_n-(n+1)t\}\,\left(\phi_{n,k}\left(\dfrac{t}{b_n}\right)\right),$ we obtain
\begin{eqnarray*}
b_n z\,\, \Pi^{\prime}_{n,p}(z) &=&\frac{n+1}{b_n} \sum_{k=0}^{\infty} p_{n,k}\left(\dfrac{z}{b_n}\right) \int_0^{b_n} \{(k+1)b_n-(n+1)t\}\,\left(\phi_{n,k}\left(\frac{t}{b_n}\right)\right) t^p \,dt\\&&+(n+1) \,\Pi_{n,p+1}(z)-(nz+b_n)\,\Pi_{n,p}(z)\\
&=&\frac{n+1}{b_n} \sum_{k=0}^{\infty} p_{n,k}\left(\dfrac{z}{b_n}\right) \int_0^{b_n} (b_n-t)\, \left(t\phi_{n,k}\left(\frac{t}{b_n}\right)\right)^{\prime}\, t^p \,dt\\&&+(n+1) \,\Pi_{n,p+1}(z)-(nz+b_n)\,\Pi_{n,p}(z)
\end{eqnarray*}
Thus integrating by parts on the right side, we get
\begin{eqnarray*}
b_n z \,\Pi^{\prime}_{n,p}(z)&=& -p b_n\, \Pi_{n,p}(z)+(p+1)\,\Pi_{n,p+1}(z)+(n+1) \,\Pi_{n,p+1}(z)-(nz+b_n)\,\Pi_{n,p}(z)\\
&=& (n+p+2)\,\Pi_{n,p+1}(z)-(nz+(p+1)b_n)\,\Pi_{n,p}(z),
\end{eqnarray*}
which completes the proof of the recurrence relation. Further, by mathematical induction on $p,$ we easily get that $\Pi_{n,p}(z)$ is a polynomial of degree $p.$
\end{proof}

\begin{lemma}\label{l2}
Let $f\in H_R$ and be bounded and integrable on $[0,b_n].$ Suppose that $f(z)=\displaystyle \sum_{p=0}^\infty c_p z^p$ for all $z\in \mathbb{D}_R$ and $1\leq r< R.$ Then for all $|z|\leq r$ and $n\in \mathbb{N},$ we have
\begin{eqnarray*}
\mathcal{F}_{n}(f;z)=\sum_{p=0}^{\infty}c_p \mathcal{F}_{n}(e_p;z).
\end{eqnarray*}
\end{lemma}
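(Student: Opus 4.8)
The plan is to prove the identity by a truncation argument, reducing everything to the fact that the tail of the Taylor series, once acted on by $\mathcal{F}_n$, is uniformly small. First I would fix $n\in\mathbb{N}$ and $z$ with $|z|\le r$, and introduce the partial sums $f_m(t)=\sum_{p=0}^m c_p t^p$. Since each $f_m$ is a polynomial and $\mathcal{F}_n$ is linear, Lemma~\ref{l1} gives immediately
\begin{equation*}
\mathcal{F}_n(f_m;z)=\sum_{p=0}^m c_p\,\mathcal{F}_n(e_p;z)=\sum_{p=0}^m c_p\,\Pi_{n,p}(z).
\end{equation*}
Hence it suffices to show that the remainder $R_m(z):=\mathcal{F}_n(f-f_m;z)$ tends to $0$ as $m\to\infty$, uniformly for $|z|\le r$; this simultaneously shows that the series $\sum_p c_p\,\mathcal{F}_n(e_p;z)$ converges and identifies its sum with $\mathcal{F}_n(f;z)$.

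Next I would estimate $R_m(z)$ directly from \eqref{eq2}:
\begin{equation*}
|R_m(z)|\le \frac{n+1}{b_n}\sum_{k=0}^{\infty}\Bigl|p_{n,k}\!\Bigl(\tfrac{z}{b_n}\Bigr)\Bigr|\int_0^{b_n}\phi_{n,k}\!\Bigl(\tfrac{t}{b_n}\Bigr)\Bigl|\sum_{p=m+1}^{\infty}c_p t^p\Bigr|\,dt.
\end{equation*}
Two quantities must be controlled. The Szász factor is harmless: for $|z|\le r$ one has $\sum_k|p_{n,k}(z/b_n)|=e^{n(|z|-\mathrm{Re}\,z)/b_n}\le e^{2nr/b_n}=:C(n,r)$. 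The moments $\frac{n+1}{b_n}\int_0^{b_n}\phi_{n,k}(t/b_n)t^p\,dt$ are evaluated by the Beta integral and grow in $p$ like $b_n^{p}$ (times factors bounded in $k$). Combining these with the Cauchy estimates $|c_p|\le \|f\|_{\rho}/\rho^{p}$, valid for every $\rho<R$, one dominates the tail by a convergent series in $p$, so $R_m(z)\to 0$ uniformly on $\overline{\mathbb{D}}_r$ and the termwise application of $\mathcal{F}_n$ is justified. Equivalently, one may phrase this as a Fubini/Tonelli interchange of $\sum_p$, $\sum_k$ and $\int_0^{b_n}$, legitimate once the absolute convergence of the resulting triple series is verified by the same bounds.

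The hard part is the control of $\bigl|\sum_{p>m}c_p t^p\bigr|$ on the \emph{whole} integration interval $[0,b_n]$. Here the difficulty lives on the $t$-side, not the $z$-side: although $z$ ranges only over the small disk $\overline{\mathbb{D}}_r$ with $r<R$, the variable $t$ runs up to $b_n$, which may exceed the radius of analyticity $R$, and then the crude estimate $\sum_{p>m}|c_p|\,b_n^{p}$ need not be summable. The point is therefore to ensure that $f$ is genuinely represented by its Taylor series on all of $[0,b_n]$, so that the Cauchy decay of $|c_p|$ outruns the $b_n^{p}$ growth of the moments; this is exactly what the analyticity hypothesis defining $H_R$ is used for, and it is where the estimate must be argued with care (splitting the integral at a radius $\rho\in(1,R)$ and using uniform convergence of the series on $[0,\rho]$ for the inner part). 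Once this tail bound is in place, the remaining computations are routine applications of Lemma~\ref{l1} and standard Beta-integral identities.
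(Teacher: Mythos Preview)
Your argument has a genuine gap precisely where you flag ``the hard part.'' With your choice $f_m(t)=\sum_{p\le m}c_p t^p$ on all of $[0,b_n]$, the remainder $f(t)-f_m(t)$ does \emph{not} tend to $0$ on $(R,b_n]$: membership in $H_R$ only asks that $f$ be continuous on $[R,b_n]$, and the Taylor series $\sum_p c_p t^p$ need not converge there (let alone to $f(t)$). The Cauchy estimates $|c_p|\le\|f\|_\rho/\rho^p$ hold only for $\rho<R$, so $|c_p|\,b_n^{\,p}$ is not summable once $b_n>R$, and your proposed domination of the tail by a convergent series in $p$ fails; nor does the assertion that ``$f$ is genuinely represented by its Taylor series on all of $[0,b_n]$'' follow from $f\in H_R$. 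Splitting the integral at some $\rho\in(1,R)$ handles the inner piece, but the outer part $\int_\rho^{b_n}\phi_{n,k}(t/b_n)\bigl(f(t)-f_m(t)\bigr)\,dt$ is left uncontrolled and in fact need not even be bounded in $m$.

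The paper avoids this obstacle by choosing the approximants differently: it sets $f_m$ equal to the partial sum $\sum_{p\le m}c_p t^p$ on $[0,r]$ but equal to $f$ itself on $(r,b_n]$. With this piecewise definition $f_m-f$ is supported in $[0,r]$, where the series converges uniformly, so $\sup_{t\in[0,b_n]}|f_m(t)-f(t)|=\|f_m-f\|_r\to 0$ and $\mathcal{F}_n(f_m;z)\to\mathcal{F}_n(f;z)$ follows from the crude bound $|\mathcal{F}_n(g;z)|\le M_{r,n}\|g\|_{B[0,\infty)}$. The identification of $\mathcal{F}_n(f_m;z)$ with $\sum_{p\le m}c_p\,\mathcal{F}_n(e_p;z)$ is then argued via correspondingly modified pieces $f_{m,p}$ (equal to $c_pe_p$ on $[0,r]$ and to $f/(m+1)$ on $(r,b_n]$, so that they sum to $f_m$). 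This piecewise construction, which moves the difficulty from the $t$-side to a place where uniform convergence is available, is the idea missing from your proposal.
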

\begin{proof}
For any $m\in \mathbb{N}$ and $r<R,$ we define $f_m(z)=\displaystyle \sum_{p=0}^m c_pz^p$ if $|z|\leq r$ and $f_m(x) =f(x)$ if $x\in (r,b_n].$
Since $|f_m(z)|\leq \displaystyle\sum_{p=0}^\infty |c_p|r^p=C_r$ for $|z|\leq r $ and $m\in \mathbb{N}$ and $f_m$ is bounded and integrable on $[0,b_n],$
\begin{eqnarray*}
\left|\mathcal{F}_{n}(f_m;z)\right| &\leq&\frac{n+1}{b_n}\sum_{k=0}^{\infty} \left|p_{n,k}\left(\frac{z}{b_n}\right)\right| \int_{0}^{b_n} \phi_{n,k}\left(\frac{t}{b_n}\right) \left|f_m(t)\right| dt<\infty.
\end{eqnarray*}
Thus, $\mathcal{F}_{n}(f_m;z)$ is well defined and an analytic function of $z.$\\
Similarly, for the function $f,$ it follows that $\mathcal{F}_{n}(f;z)$ is also well defined and it is an analytic function of $z.$\\
Further, we assume that $f_{m,p}(z)=c_p e_p(z)$ if $|z|\leq r$ and $f_{m,p}(x)=\frac{f(x)}{m+1}$ if $x\in (r,b_n].$ Let $|z|\leq r $ and $1\leq r< R.$\\
Defining $f_m(z)=\displaystyle\sum_{p=0}^{m}f_{m,p}(z),$ by the linearity of $\mathcal{F}_{n},$ it follows that
\begin{eqnarray*}
\mathcal{F}_{n}(f_m;z)=\sum_{p=0}^{m}c_p \mathcal{F}_{n}(e_p;z)\,\,\, \mbox{for all}\,\,\, |z|\leq r\,\, \mbox{and}\,\, m,\,n \in \mathbb{N}.
\end{eqnarray*}
It is sufficient to prove that for any fixed $n\in \mathbb{N}$
\begin{eqnarray*}
\lim_{m\rightarrow \infty}\mathcal{F}_{n}(f_m;z)= \mathcal{F}_{n}(f;z),
\end{eqnarray*}
uniformly in compact disk $|z|\leq r.$\\
But this is immediate from $\displaystyle\lim_{m\rightarrow\infty}\Vert f_m-f\Vert_r=0$, from $\Vert f_m-f\Vert_{B[0,\infty)}\leq\Vert f_m-f\Vert_r$ and from the inequality \\
\noindent
$|\mathcal{F}_{n}(f_m;z)-\mathcal{F}_{n}(f;z)|$
\begin{eqnarray*}
&\leq& \frac{n+1}{b_n}\sum_{k=0}^{\infty} \left|p_{n,k}\left(\frac{z}{b_n}\right)\right|\int_{0}^{b_n} \phi_{n,k}\left(\frac{t}{b_n}\right) |f_m(t)-f(t)| dt\\
&\leq& \parallel f_m-f\parallel_r \sum_{k=0}^{\infty} \left|p_{n,k}\left(\frac{z}{b_n}\right)\right|\\
&\leq& \parallel  f_m-f\parallel_r \sum_{k=0}^{\infty}\left|\frac{e^{\frac{-nz}{b_n}}\left(\frac{nz}{b_n}\right)^k}{k!}\right|\\
&\leq& \parallel  f_m-f\parallel_r \left|e^{\frac{-nz}{b_n}}\right| \sum_{k=0}^{\infty} \frac{(n|z|)^k}{k! (b_n)^k}\\
&\leq& \parallel  f_m-f\parallel_r \left|e^{\frac{-nz}{b_n}}\right| e^{\frac{n|z|}{b_n}}\leq M_{r,n}  \parallel  f_m-f\parallel_r
\end{eqnarray*}
valid for all $|z|\leq r,$ where $\parallel . \parallel_{B[0,\infty)}$ denotes the uniform norm on $[0,\infty).$ Thus, as $m\rightarrow \infty,$ we get the required result.
\end{proof}

\section{Main results}
\subsection{Upper estimates}
In the following theorem, we obtain an upper estimate of the error in the approximation of an analytic function by the operators (\ref{eq2}) in a compact disk.
\begin{thm}\label{t1}
Let $f:[R,b_n]\cup \overline{\mathbb{D}}_R\rightarrow \mathbb{C}$ be continuous in $[R,b_n]\cup \overline{\mathbb{D}}_R$ and analytic in $\mathbb{D}_R.$ Further, let $f$ be bounded and integrable in $[0,b_n].$ Suppose that there exists $M>0$ and $A\in \left(\frac{1}{R},1\right)$ with the property $|c_p|\leq \dfrac{M A^p}{(2p)!},\,\, \forall \,\,p\in\mathbb{N}\cup\{0\}.$ Let $1\leq r<\frac{1}{A}$ be arbitrary but fixed then for all $|z|\leq r$ and
$n \geq n_0,\, n_0\in \mathbb{N},$ we have
\begin{eqnarray*}
|\mathcal{F}_{n}(f;z)-f(z)|\leq C_{r,A}(f) \frac{b_n+1}{n+2}, \,\,\,\, \mbox{where}\,\,\,\, C_{r,A}(f)=M  \displaystyle\sum_{p=1}^{\infty} (A r)^{p}< \infty.
\end{eqnarray*}
\end{thm}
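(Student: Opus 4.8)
The plan is to reduce the estimate to a term-by-term comparison of $\Pi_{n,p}$ with $e_p$ and then let the decay of the coefficients $c_p$ perform the summation. By Lemma~\ref{l2}, for $|z|\le r$ one has $\mathcal{F}_{n}(f;z)=\sum_{p=0}^{\infty}c_p\,\Pi_{n,p}(z)$, while $f(z)=\sum_{p=0}^{\infty}c_p\,e_p(z)$; since $\mathcal{F}_{n}(e_0;z)=1=e_0(z)$ by Lemma~\ref{l1}, subtracting gives
\[
\mathcal{F}_{n}(f;z)-f(z)=\sum_{p=1}^{\infty}c_p\bigl(\Pi_{n,p}(z)-e_p(z)\bigr).
\]
Taking moduli and inserting $|c_p|\le MA^p/(2p)!$, the whole theorem follows once I prove the key estimate
\[
\bigl\|\Pi_{n,p}-e_p\bigr\|_r\le (2p)!\,r^p\,\frac{b_n+1}{n+2},\qquad p\ge1,\ n\ge n_0,
\]
because then $\sum_{p\ge1}|c_p|\,\|\Pi_{n,p}-e_p\|_r\le M\,\frac{b_n+1}{n+2}\sum_{p\ge1}(Ar)^p$, and this series converges precisely because $r<1/A$ forces $Ar<1$, producing the stated constant $C_{r,A}(f)$.

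For the key estimate I would argue by induction on $p$, writing $E_{n,p}:=\Pi_{n,p}-e_p$, a polynomial of degree at most $p$ with $E_{n,0}\equiv0$. Substituting $\Pi_{n,p}=e_p+E_{n,p}$ into the recurrence of Lemma~\ref{l1} and cancelling the part that rebuilds $e_{p+1}$, I expect a relation of the form
\[
E_{n,p+1}(z)=\frac{(2p+1)b_nz^p-(p+2)z^{p+1}}{n+p+2}+\frac{b_nz\,E_{n,p}'(z)+\bigl(nz+(p+1)b_n\bigr)E_{n,p}(z)}{n+p+2}.
\]
The derivative is controlled by Bernstein's inequality $\|E_{n,p}'\|_r\le\frac{p}{r}\|E_{n,p}\|_r$, which converts the $b_nz\,E_{n,p}'$ contribution into $\frac{pb_n}{n+p+2}\|E_{n,p}\|_r$; after collecting terms the multiplier of $\|E_{n,p}\|_r$ is $\frac{nr+(2p+1)b_n}{n+p+2}$, and the inhomogeneous polynomial is bounded, using $r\ge1$ and $p+2\le2p+1$, by $(2p+1)r^{p+1}\frac{b_n+1}{n+2}$.

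I expect the main obstacle to be the unbounded weight $b_n$: naively the term $\frac{(2p+1)b_n}{n+p+2}\|E_{n,p}\|_r$ would pick up a factor $b_n(b_n+1)$ from the inductive hypothesis and produce a $b_n^2/n$ contribution, which is too large for the target order $\tfrac{b_n+1}{n+2}$. The remedy is to use $\lim_n b_n/n=0$ to choose $n_0$ so that $b_n\le n$, hence $\frac{b_n}{n+p+2}\le1$, for every $n\ge n_0$; one factor of $b_n$ is then absorbed by the denominator and the multiplier of $\|E_{n,p}\|_r$ is at most $r+(2p+1)\le(2p+2)r$. Feeding in the inductive hypothesis reduces the step to the purely arithmetic inequality $(2p+1)+(2p+2)(2p)!\le(2p+2)!$, valid for all $p\ge1$, which closes the induction; together with the base case $\|E_{n,1}\|_r=\bigl\|\tfrac{b_n-2z}{n+2}\bigr\|_r\le 2r\,\frac{b_n+1}{n+2}$ this proves the key estimate and hence the theorem.
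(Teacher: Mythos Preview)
Your proof is correct and follows essentially the same approach as the paper: derive the recurrence for $E_{n,p}=\Pi_{n,p}-e_p$ from Lemma~\ref{l1}, control $E_{n,p}'$ by Bernstein's inequality, and close the induction $\|E_{n,p}\|_r\le(2p)!\,r^p\frac{b_n+1}{n+2}$ before summing via Lemma~\ref{l2} and the hypothesis on $c_p$. Your treatment is in fact slightly more transparent than the paper's in one respect: where the paper simply asserts that the inequality $\bigl(r+\frac{(2p+1)b_n}{n+2}\bigr)(2p)!+(2p+2)r\le(2p+2)!\,r$ holds for all $p\ge1$ once $n\ge n_0$, you make the role of $b_n/n\to0$ explicit by choosing $n_0$ so that $b_n\le n$, which gives $\frac{b_n}{n+p+2}\le1$ uniformly in $p$ and reduces the step to the clean arithmetic inequality $(2p+1)+(2p+2)(2p)!\le(2p+2)!$.
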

\begin{proof}
By using the recurrence relation of Lemma \ref{l1}, we have
\begin{eqnarray*}
\Pi_{n,p+1}(z)= \frac{b_n z}{n+p+2}\, \Pi^{\prime}_{n,p}(z)+\frac{nz+(p+1)b_n}{n+p+2}\, \Pi_{n,p}(z), \, \forall z\in \mathbb{C}, p\in \mathbb{N}\cup \{0\}, n\in \mathbb{N}.
\end{eqnarray*}
From this we immediately get the recurrence formula
\begin{eqnarray*}
\Pi_{n,p}(z)-z^p &= &\frac{b_n z}{n+p+1}\, \left(\Pi_{n,p-1}(z)-z^{p-1}\right)^{\prime}+\frac{nz+p b_n}{n+p+1}\, \left(\Pi_{n,p-1}(z)-z^{p-1}\right)+\frac{(2p-1)b_n-(p+1)z}{n+p+1} z^{p-1},  \, \\&&\forall z\in \mathbb{C}, p, n \in \mathbb{N}.
\end{eqnarray*}
Now, for $1\leq r<R,$ by linear transformation the Bernstein's inequality in the closed unit disk becomes $P_p^{\prime}(z)\leq \frac{p}{r}\|P_p\|_r,$ for all $|z|\leq r,$ where $P_p(z)$ is a polynomial of degree $\leq p.$ Thus, from the above recurrence relation, we get
\begin{eqnarray*}
\|\Pi_{n,p}-e_p\|_r &\leq& \frac{b_n r}{n+p+1}\, \|\Pi_{n,p-1}-e_{p-1}\|_r \frac{p-1}{r}+\frac{nr+p b_n}{n+p+1}\, \|\Pi_{n,p-1}-e_{p-1}\|_r+\frac{(2p-1)b_n+(p+1)r}{n+p+1} r^{p-1}\\
&\leq& \left(r+\frac{(2p-1)b_n}{n+2}\right)\|\Pi_{n,p-1}-e_{p-1}\|_r+\frac{(2p-1)b_n+(p+1)r}{n+2}r^{p-1}\\
&\leq& \left(r+\frac{(2p-1)b_n}{n+2}\right)\|\Pi_{n,p-1}-e_{p-1}\|_r+\frac{2p (b_n+1)r^{p}}{n+2}.
\end{eqnarray*}
In what follows we prove the result by mathematical induction with respect to $p,$ that this recurrence relation implies
\begin{eqnarray*}
\|\Pi_{n,p}-e_p\|_r &\leq& \frac{(2p)! r^p (b_n+1)}{n+2},\,\, \mbox{for all}\,\, p \in \mathbb{N},\, n\geq n_0, n_0\in \mathbb{N}.
\end{eqnarray*}
Indeed for $p=1$ and $n\geq n_0, n_0\in \mathbb{N},$ the left hand side is $\dfrac{b_n+2r}{n+2}$ and the right hand side is $\dfrac{2r(b_n+2)}{n+2}.$ Suppose that it is valid for $p,$ the above recurrence relation implies that
\begin{eqnarray*}
\|\Pi_{n,p+1}-e_{p+1}\|_r &\leq& \left(r+\frac{(2p+1)b_n}{n+2}\right)\frac{(2p)! r^p (b_n+1)}{n+2}+ (2p+2)r^{p+1} \frac{b_n+1}{n+2},
\end{eqnarray*}
it remains to prove that
\begin{eqnarray*}
\left(r+\frac{(2p+1)b_n}{n+2}\right)\frac{(2p)! r^p (b_n+1)}{n+2}+ (2p+2)r^{p+1}\frac{b_n+1}{n+2} \leq \frac{(2p+2)! r^{p+1} (b_n+1)}{n+2}
\end{eqnarray*}
or
\begin{eqnarray*}
\left(r+\frac{(2p+1)b_n}{n+2}\right)(2p)!+(2p+2)r&\leq &(2p+2)! r.
\end{eqnarray*}
It is easy to see by mathematical induction that this last inequality holds true for all $p\geq 1$ and $n\geq n_0, n_0\in \mathbb{N}.$ From the hypothesis on $f,$ by Lemma \ref{l2} we can write
\begin{eqnarray*}
\mathcal{F}_{n}(f;z)=\sum_{p=0}^{\infty}c_p \mathcal{F}_{n}(e_p;z)=\sum_{p=0}^{\infty}c_p \Pi_{n,p}(z),\,\, \mbox{for all}\,\, z\in \mathbb{D}_R,\, n\in \mathbb{N},
\end{eqnarray*}
which from the hypothesis on $c_p$ immediately implies for all $|z|\leq r$ with $Re(z)\leq b_n, n\in \mathbb{N}$ with $n\geq n_0, n_0\in \mathbb{N}$
\begin{eqnarray*}
|\mathcal{F}_{n}(f;z)-f(z)|\leq \sum_{p=1}^{\infty}|c_p| |\Pi_{n,p}(z)-e_p(z)|\leq \sum_{p=1}^{\infty} \frac{M (Ar)^p (b_n+1)}{n+2}=C_{r,A}(f)\frac{(b_n+1)}{n+2},
\end{eqnarray*}
where $C_{r,A}(f)=M  \displaystyle\sum_{p=1}^{\infty} (Ar)^{p}< \infty $ for all $1\leq r<\frac{1}{A},$ by ratio test. Thus, the proof is completed.
\end{proof}

\subsection{Voronovskaja-type result}
In the following theorem we obtain a quantitative Voronovskaja-type result:
\begin{thm}\label{t2}
Let $f\in H_R$ and be bounded and integrable on $[0,b_n]$ and suppose that there exists $M>0$ and $A\in \left(\frac{1}{R},1\right)$ with the property $|c_p|\leq \dfrac{M A^p}{(2p)!}.$ Let $1\leq r<\frac{1}{A}$ be arbitrary but fixed then for all $|z|\leq r$ and $p\in \mathbb{N}\cup \{0\}, \, n\geq n_0,\,\, n_0\in \mathbb{N},$ we have
\begin{eqnarray*}
\left|\mathcal{F}_{n}(f;z)-f(z)-\frac{b_n}{n+2}\left(\left(1-\frac{2z}{b_n}\right)f^{\prime}(z)+z\left(1-\frac{z}{2 b_n}\right) f^{\prime\prime}(z)\right)\right|\leq L_{r,A}(f) \frac{(b_n+1)^2}{(n+2)^2},
\end{eqnarray*}
where $L_{r,A}(f)= \dfrac{2 M }{(1-Ar)\log\frac{1}{Ar}}+4M\displaystyle\sum_{p=1}^{\left[\alpha\right]}p(Ar)^{p}<\infty.$
\end{thm}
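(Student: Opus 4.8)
The plan is to linearize the problem over the monomials and then sum, exactly in the spirit of the proof of Theorem~\ref{t1}. Writing $e_p'=p\,z^{p-1}$, $e_p''=p(p-1)z^{p-2}$, I would introduce the second-order error polynomial
\[
E_{n,p}(z):=\Pi_{n,p}(z)-e_p(z)-\frac{b_n}{n+2}\left[\left(1-\frac{2z}{b_n}\right)p\,z^{p-1}+z\left(1-\frac{z}{2b_n}\right)p(p-1)z^{p-2}\right],
\]
i.e. the expression inside the absolute value of the statement with $f$ replaced by $e_p$. Since $f\in H_R$ with $|c_p|\le MA^p/(2p)!$, Lemma~\ref{l2} gives $\mathcal{F}_{n}(f;z)=\sum_{p=0}^{\infty}c_p\,\Pi_{n,p}(z)$ for $|z|\le r$, while $f'(z)=\sum_p c_p p z^{p-1}$ and $f''(z)=\sum_p c_p p(p-1)z^{p-2}$ by termwise differentiation; hence the monomial Voronovskaja terms sum to the bracket in the statement and
\[
\left|\mathcal{F}_{n}(f;z)-f(z)-\frac{b_n}{n+2}\left(\left(1-\frac{2z}{b_n}\right)f'(z)+z\left(1-\frac{z}{2b_n}\right)f''(z)\right)\right|=\left|\sum_{p=0}^{\infty}c_p\,E_{n,p}(z)\right|\le\sum_{p=0}^{\infty}|c_p|\,\|E_{n,p}\|_r.
\]
Thus everything reduces to a bound on $\|E_{n,p}\|_r$ that grows slowly enough against $(2p)!$.

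Next I would convert the recurrence of Lemma~\ref{l1} into a recurrence for $E_{n,p}$. Substituting $\Pi_{n,p}=e_p+(\text{Voronovskaja term})+E_{n,p}$ into $\Pi_{n,p+1}=\frac{b_nz}{n+p+2}\Pi_{n,p}'+\frac{nz+(p+1)b_n}{n+p+2}\Pi_{n,p}$ yields an identity of the form
\[
E_{n,p+1}(z)=\frac{b_n z}{n+p+2}\,E_{n,p}'(z)+\frac{nz+(p+1)b_n}{n+p+2}\,E_{n,p}(z)+X_{n,p}(z).
\]
The transport part is handled exactly as in Theorem~\ref{t1}: by Bernstein's inequality $\|E_{n,p}'\|_r\le\frac{p}{r}\|E_{n,p}\|_r$, which produces the same amplification factor $\left(r+\frac{(2p+1)b_n}{n+2}\right)$ encountered there.

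The heart of the argument is that the source term $X_{n,p}$ is of order $(b_n+1)^2/(n+2)^2$. The Voronovskaja bracket is built precisely so that, after this subtraction, the first-order $O(b_n/(n+2))$ contributions cancel: the surviving $X_{n,p}$ is an explicit polynomial of degree $\le p+1$ whose coefficients are $O\!\left((b_n+1)^2(n+2)^{-2}\,\mathrm{poly}(p)\right)$ (equivalently, any residual $\Pi_{n,p}-e_p$ appears multiplied by an $O(b_n/(n+2))$ coefficient and is then controlled by the first-order estimate $\|\Pi_{n,p}-e_p\|_r\le\frac{(2p)!\,r^p(b_n+1)}{n+2}$ already established inside Theorem~\ref{t1}). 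This gives the scalar recurrence
\[
\|E_{n,p+1}\|_r\le\left(r+\frac{(2p+1)b_n}{n+2}\right)\|E_{n,p}\|_r+\frac{(b_n+1)^2}{(n+2)^2}\,Q_p\,r^{p+1},
\]
with $Q_p$ polynomial in $p$. Choosing $n_0$ so that $b_n/(n+2)$ is bounded for $n\ge n_0$ (possible since $b_n/n\to0$), an induction on $p$ — identical in mechanism to the one in Theorem~\ref{t1}, the factorial growth of the target absorbing both the linear-in-$p$ amplification and $Q_p$ — yields
\[
\|E_{n,p}\|_r\le 4\,(2p)!\,p\,\frac{(b_n+1)^2}{(n+2)^2}\,r^p,\qquad p\in\mathbb{N},\ n\ge n_0.
\]

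Finally I would sum. Using $|c_p|\le MA^p/(2p)!$,
\[
\sum_{p=0}^{\infty}|c_p|\,\|E_{n,p}\|_r\le 4M\,\frac{(b_n+1)^2}{(n+2)^2}\sum_{p=1}^{\infty}p\,(Ar)^p.
\]
The series is split at $\alpha=1/\log\frac{1}{Ar}$, the index at which $p(Ar)^p$ is maximal: the head contributes the finite sum $4M\sum_{p=1}^{[\alpha]}p(Ar)^p$, while the monotonically decreasing tail is estimated by comparison with $\int_{\alpha}^{\infty}t\,e^{-t\log(1/Ar)}\,dt$ and, using $\log\frac{1}{Ar}\ge 1-Ar$, is bounded by $\frac{2M}{(1-Ar)\log\frac1{Ar}}$; together these give $L_{r,A}(f)$. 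I expect the main obstacle to be the verification that $X_{n,p}$ is genuinely second order: the bookkeeping that, after subtracting the Voronovskaja bracket, every surviving coefficient is $O\!\left((b_n+1)^2(n+2)^{-2}\right)$ and grows only polynomially in $p$, so that $\|E_{n,p}\|_r/(2p)!$ remains summable against $(Ar)^p$; any weaker cancellation would destroy the convergence of the final series.
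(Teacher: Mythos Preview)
Your overall strategy matches the paper's: define the monomial Voronovskaja remainder $E_{n,p}$, derive a recurrence for it from Lemma~\ref{l1}, show the inhomogeneous term is $O\!\left((b_n+1)^2/(n+2)^2\right)$, iterate, and sum against $|c_p|\le MA^p/(2p)!$. The paper computes $\mathcal{X}_{n,p}$ explicitly and bounds it by $2(p+1)^3 r^p (b_n+1)^2/(n+2)^2$, and it controls the derivative contribution exactly as you suggest, via Bernstein's inequality together with the first-order estimate $\|\Pi_{n,p-1}-e_{p-1}\|_r\le (2p-2)!\,r^{p-1}(b_n+1)/(n+2)$ from the proof of Theorem~\ref{t1}.

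Where you diverge is in the role of $\alpha$ and the treatment of large $p$. In the paper $\alpha=(n+2)/b_n$, \emph{not} $1/\log(1/Ar)$: it is the threshold beyond which the amplification factor $r+pb_n/(n+2)$ is no longer $\le r+1$. The paper iterates its recurrence only for $1\le p\le\alpha$, obtaining $|\mathcal{E}_{n,p}|\le 4p(2p)!\,r^p(b_n+1)^2/(n+2)^2$ in that range; for $p>\alpha$ it abandons the recurrence entirely and bounds $|\mathcal{E}_{n,p}|$ directly by the triangle inequality plus the first-order estimate of Theorem~\ref{t1}, obtaining only a bound of order $(b_n+1)/(n+2)$. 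The extra factor is then recovered from the tail sum $\sum_{p>\alpha}(Ar)^p\le (Ar)^{\alpha}/(1-Ar)$ via the elementary inequality $(Ar)^{\alpha}\le 1/\bigl(\alpha\log\tfrac{1}{Ar}\bigr)=b_n/\bigl((n+2)\log\tfrac{1}{Ar}\bigr)$, which is precisely how the term $2M/\bigl((1-Ar)\log\tfrac{1}{Ar}\bigr)$ in $L_{r,A}(f)$ arises. Your plan to carry the induction through for \emph{all} $p$ (letting the $(2p)!$ in the target absorb the linear-in-$p$ amplification) is plausible and, if it works, would make the split unnecessary; but it is not the paper's argument, and your identification of $\alpha$ together with the integral-comparison tail would not reproduce the stated constant $L_{r,A}(f)$.
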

\begin{proof}
By using Lemma \ref{l2}, we may write $\mathcal{F}_{n}(f;z)=\displaystyle\sum_{p=0}^{\infty} c_p \mathcal{F}_{n}(e_p;z)$ and
\begin{eqnarray*}
\frac{b_n}{n+2}\left(\left(1-\frac{2z}{b_n}\right)f^{\prime}(z)+z\left(1-\frac{z}{2 b_n}\right) f^{\prime\prime}(z)\right)=\frac{b_n}{n+2} \sum_{p=1}^{\infty} c_p \left(p^2 z^{p-1}-\frac{p^2+3p}{2b_n}z^p\right).
\end{eqnarray*}
Defining $\Pi_{n,p}(z)=\mathcal{F}_{n}(e_p)(z),$ we get
\noindent\\
$\left|\mathcal{F}_{n}(f;z)-f(z)-\dfrac{b_n}{n+2}\left(\left(1-\dfrac{2z}{b_n}\right)f^{\prime}(z)+z\left(1-\dfrac{z}{2 b_n}\right) f^{\prime\prime}(z)\right)\right|$
\begin{eqnarray*}
&\leq& \sum_{p=1}^\infty |c_p| \left|\Pi_{n,p}(z)-e_p(z)-\frac{b_n}{n+2}\left(p^2 e_{p-1}(z)-\frac{p^2+3p}{2b_n}e_p(z)\right)\right|,
\end{eqnarray*}
for all $z\in \mathbb{D}_R, \, n\in \mathbb{N}.$
Now, by applying Lemma \ref{l1}, we get the following recurrence relation
\begin{eqnarray*}
\Pi_{n,p}(z)= \frac{b_n z}{n+p+1}\, \Pi^{\prime}_{n,p-1}(z)+\frac{nz+p b_n}{n+p+1}\, \Pi_{n,p-1}(z).
\end{eqnarray*}
Let us denote
\begin{eqnarray*}
\mathcal{E}_{n,p}(z)=\Pi_{n,p}(z)-e_p(z)-\frac{b_n}{n+2}\left(p^2 e_{p-1}(z)-\frac{p^2+3p}{2b_n}e_p(z)\right).
\end{eqnarray*}
Then,
\begin{eqnarray}\label{eq3}
\mathcal{E}_{n,p}(z)&=&\frac{b_n z}{n+p+1} \mathcal{E}_{n,p-1}^{\prime}(z)+\frac{nz+p b_n}{n+p+1}\mathcal{E}_{n,p-1}(z)+ \mathcal{X}_{n,p}(z),
\end{eqnarray}
where\\
\noindent $\mathcal{X}_{n,p}(z)$
\begin{eqnarray*}
&=& \frac{1}{2(n+p+1)(n+2)}\bigg(z^{p-2}b_n^2 \{2(p-1)^2(p-2)+2p(p-1)^2\}\\&&
+z^{p-1}b_n\{(n+2)(4p-2)+2n(p-1)^2-2(n+p+1)p^2-(p-1)(2p^2+p-2)\}\\&&
+z^p\{2n(n+2)-2(n+p+1)(n+2)-n(p-1)^2-3n(p-1)+(p^2+3p)(n+p+1)\}\bigg)\\
&=& \frac{1}{2(n+p+1)(n+2)}\bigg(z^{p-2}b_n^2 \{4(p-1)^3\}-z^{p-1}b_n\{4p^3+p^2-10p+6\}+z^p\{(p+1)(p^2+3p-4)\}\bigg).
\end{eqnarray*}
Hence
\begin{eqnarray}\label{eq4}
|\mathcal{X}_{n,p}(z) |&\leq& \frac{2(b_n+1)^2 (p+1)^3}{(n+2)^2} r^{p},\,\, \forall \,\,n\in \mathbb{N}.
\end{eqnarray}

It is immediate that $\mathcal{E}_{n,p}(z)$ is a polynomial in $z$ of degree $\leq p$ and that $\mathcal{E}_{n,0}(z)=0.$ Combining (\ref{eq3}) and (\ref{eq4}), we have
\begin{eqnarray*}
|\mathcal{E}_{n,p}(z)|&\leq& \frac{b_n r}{n+2}|\mathcal{E}_{n,p-1}^\prime(z)|+\left(r+\frac{p b_n }{n+2}\right)|\mathcal{E}_{n,p-1}(z)|+\frac{2(b_n+1)^2 (p+1)^3}{(n+2)^2} r^{p}.
\end{eqnarray*}
Now, we shall find the estimate of $\mathcal{E}_{n,p-1}^\prime(z)$ for $p\geq 1.$ Taking into account the fact that $\mathcal{E}_{n,p-1}(z)$ is a polynomial of degree $\leq p-1,$ we have
\begin{eqnarray*}
|\mathcal{E}_{n,p-1}^{\prime}(z)|&\leq& \frac{p-1}{r}\|\mathcal{E}_{n,p-1}\|_r\leq \frac{p-1}{r} \bigg(\left\|\Pi_{n,p-1}-e_{p-1}\right\|_r\\&&+\left\|\frac{b_n}{n+2}\left((p-1)^2 e_{p-2}(z)-\frac{(p-1)^2+3(p-1)}{2b_n}e_{p-1}(z)\right)\right\|_r\bigg)\\
&\leq& \frac{p-1}{r} \left(\frac{(2p-2)! r^{p-1}(b_n+1)}{n+2}+\frac{2(p-1)^2b_n+(p-1)(p+2)}{2(n+2)}r^{p-1}\right)\\
&\leq& \frac{2(2p-2)! (p-1)r^{p-2}(b_n+1)}{n+2},\,\, \forall \,\, n\in \mathbb{N}.
\end{eqnarray*}
Thus
\begin{eqnarray*}
\frac{r b_n}{n+2}|\mathcal{E}_{n,p-1}^{\prime}(z)|&\leq& \frac{2(2p-2)! (p-1)r^{p-1}(b_n+1)^2}{(n+2)^2},\,\, \forall \,\, n\in \mathbb{N}
\end{eqnarray*}
and \\ \noindent $|\mathcal{E}_{n,p}(z)|$
\begin{eqnarray*}
&\leq& \left(r+\frac{p b_n }{n+2}\right)|\mathcal{E}_{n,p-1}(z)|+\frac{2(2p-2)! (p-1)r^{p-1}(b_n+1)^2}{(n+2)^2}+\frac{2(b_n+1)^2 (p+1)^3}{(n+2)^2} r^{p}\\
&\leq&  \left(r+\frac{p b_n }{n+2}\right)|\mathcal{E}_{n,p-1}(z)|+ \frac{4(2p)! r^{p}(b_n+1)^2}{(n+2)^2},\,\, \mbox{for all}\,\, |z|\leq r\,\, \mbox{and}\,\, n\in \mathbb{N}.
\end{eqnarray*}
For $1\leq p \leq \dfrac{n+2}{b_n}=\alpha \,(say)$ and $|z|\leq r,$ taking into account that $ \left(r+p \alpha\right)\leq (r+1),$ we have
\begin{eqnarray*}
|\mathcal{E}_{n,p}(z)|&\leq& (r+1)|\mathcal{E}_{n,p-1}(z)|+  \frac{4(2p)! r^{p}(b_n+1)^2}{(n+2)^2}.
\end{eqnarray*}
But $\mathcal{E}_{n,0}(z)=0,$ for any $z\in \mathbb{C},$ therefore by writing the inequality for $1\leq p\leq \alpha,$ we easily obtain step by step the following
\begin{eqnarray*}
|\mathcal{E}_{n,p}(z)|&\leq& \frac{4 r^{p}(b_n+1)^2}{(n+2)^2}\sum_{j=1}^{p} (2j)!\leq \frac{4 p (2p)!r^{p} (b_n+1)^2}{(n+2)^2}.
\end{eqnarray*}
Denoting by $\left[\alpha\right]$ the integral part of $\alpha,$ it follows that\\
\noindent $\left|\mathcal{F}_{n}(f;z)-f(z)-\dfrac{b_n}{n+2}\left(\left(1-\dfrac{2z}{b_n}\right)f^{\prime}(z)+z\left(1-\dfrac{z}{2 b_n}\right) f^{\prime\prime}(z)\right)\right|$
\begin{eqnarray*}
&\leq &\sum_{p=1}^{\left[\alpha\right]} |c_p| |\mathcal{E}_{n,p}(z)|+\sum_{p=\left[\alpha\right]+1}^\infty |c_p| |\mathcal{E}_{n,p}(z)|\\
&\leq& \sum_{p=1}^{\left[\alpha\right]} |c_p| \frac{4 p (2p)!r^{p} (b_n+1)^2}{(n+2)^2}+\sum_{p=\left[\alpha\right]+1}^\infty |c_p| |\mathcal{E}_{n,p}(z)|\\
&\leq& \frac{4M(b_n+1)^2}{(n+2)^2}\sum_{p=1}^{\left[\alpha\right]} p (Ar)^{p}+\sum_{p=\left[\alpha\right]+1}^\infty |c_p| |\mathcal{E}_{n,p}(z)|.
\end{eqnarray*}
But
\begin{eqnarray*}
\sum_{p=\left[\alpha\right]+1}^\infty |c_p| |\mathcal{E}_{n,p}(z)|&\leq& \sum_{p=\left[\alpha\right]+1}^\infty |c_p| \left(\left|\Pi_{n,p}(z)-e_{p}(z)\right|+\frac{b_n}{n+2}\left|p^2 e_{p-1}(z)-\frac{p^2+3p}{2b_n}e_{p}(z)\right|\right)\\
&\leq& \sum_{p=\left[\alpha\right]+1}^\infty |c_p| \left(\left|\frac{(2p)! r^p (b_n+1)}{n+2}\right|+\frac{r^{p}}{n+2}\left|p^2b_n +\frac{p(p+3)}{2} \right|\right)\\
&\leq&  2\sum_{p=\left[\alpha\right]+1}^\infty |c_p| \frac{(2p)! r^{p}(b_n+1)}{n+2}\\
&\leq& \frac{2M(b_n+1)}{(n+2)}  \sum_{p=\left[\alpha\right]+1}^\infty (Ar)^{p} \leq \frac{2 M (b_n+1)}{(n+2)} \frac{(Ar)^{\alpha}}{(1-Ar)}, \forall n\geq n_0, \, n_0\in \mathbb{N}.
\end{eqnarray*}
Also, by $e^t=1+t+\frac{t^2}{2}+\cdot \cdot \cdot,$ we get $e^t\geq t \, \,\,\forall\,\, t\geq 0,$ which combined with \\
$\dfrac{1}{(Ar)^{\alpha}}= e^{\alpha\log\left(\frac{1}{Ar}\right)} \Rightarrow \dfrac{1}{(Ar)^{\alpha}}\geq \alpha \log\frac{1}{Ar},$ for all $\alpha >0.$ So, $(Ar)^\alpha\leq \frac{1}{\alpha \log\frac{1}{Ar}}.$\\
Therefore, we get
\begin{eqnarray*}
\sum_{p=\left[\alpha\right]+1}^\infty |c_p| |\mathcal{E}_{n,p}(z)| \leq \frac{2 M (b_n+1)^2}{(n+2)^2(1-Ar)\log\frac{1}{Ar}},\,\, \mbox{for all}\,\, |z|\leq r\,\, \mbox{and}\,\, n\geq n_0, \ n_0\in \mathbb{N}.
\end{eqnarray*}
Finally, we obtain\\
\noindent
$\left|\mathcal{F}_{n}(f;z)-f(z)-\dfrac{b_n}{n+2}\left(\left(1-\dfrac{2z}{b_n}\right)f^{\prime}(z)+z\left(1-\dfrac{z}{2 b_n}\right) f^{\prime\prime}(z)\right)\right|$
\begin{eqnarray*}
&\leq& \frac{4M(b_n+1)^2}{(n+2)^2}\sum_{p=1}^{\left[\alpha\right]} p (Ar)^{p}+\frac{2 M (b_n+1)^2}{(n+2)^2(1-Ar)\log\frac{1}{Ar}},
\end{eqnarray*}
where for $rA<1,$ by ratio test the above series is convergent. This completes the proof of the theorem.
\end{proof}

\subsection{Exact order of approximation}
To obtain the exact degree of approximation by $\mathcal{F}_{n}(f;z),$ in the following theorem we get a lower estimate of the error in the approximation of $f$ by $\mathcal{F}_{n}(f;z)$:
\begin{thm}\label{t3}
In the hypothesis of Theorem \ref{t2}, if $f$ is not a polynomial of degree $\leq 0,$ then for any $1\leq r< R,$ we have
\begin{eqnarray*}
\parallel \mathcal{F}_{n}(f)-f\parallel_r \geq \frac{b_n+1}{n+2}P_{r}(f), \,\,\,\, n\geq n_0, \,\, n_0\in \mathbb{N},
\end{eqnarray*}
where the constants in the equivalence $P_{r}(f)>0,$ depends on $f,r.$
\end{thm}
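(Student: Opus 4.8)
The plan is to read off the exact order directly from the quantitative Voronovskaja estimate of Theorem \ref{t2}. The idea is that the Voronovskaja term there, after collecting powers of $b_n$, splits into a dominant piece $\frac{b_n}{n+2}g$ with $g$ an $n$-independent analytic function, plus strictly lower-order pieces, and that $g$ fails to vanish identically exactly when $f$ is nonconstant. First I would rewrite the bracket in Theorem \ref{t2} as
\[
\Big(1-\tfrac{2z}{b_n}\Big)f'(z)+z\Big(1-\tfrac{z}{2b_n}\Big)f''(z)=\big(f'(z)+zf''(z)\big)-\frac{1}{b_n}\Big(2zf'(z)+\tfrac{z^2}{2}f''(z)\Big),
\]
so that, writing $g(z)=f'(z)+zf''(z)=(zf'(z))'$ and $h(z)=2zf'(z)+\frac{z^2}{2}f''(z)$, Theorem \ref{t2} furnishes the decomposition
\[
\mathcal{F}_{n}(f;z)-f(z)=\frac{b_n}{n+2}\,g(z)-\frac{1}{n+2}\,h(z)+R_n(z),\qquad \|R_n\|_r\le L_{r,A}(f)\frac{(b_n+1)^2}{(n+2)^2}.
\]

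Next I would apply the reverse triangle inequality in the norm $\|\cdot\|_r$ to get
\[
\|\mathcal{F}_{n}(f)-f\|_r\ \ge\ \frac{b_n}{n+2}\,\|g\|_r-\frac{1}{n+2}\,\|h\|_r-L_{r,A}(f)\frac{(b_n+1)^2}{(n+2)^2},
\]
and then factor out $\frac{b_n+1}{n+2}$, so that the right-hand side equals $\frac{b_n+1}{n+2}$ times
\[
\frac{b_n}{b_n+1}\,\|g\|_r-\frac{1}{b_n+1}\,\|h\|_r-L_{r,A}(f)\frac{b_n+1}{n+2}.
\]
Here I would invoke both growth hypotheses on $b_n$: from $\lim_n b_n=\infty$ the first coefficient $\frac{b_n}{b_n+1}\to1$ and $\frac{1}{b_n+1}\to0$, while from $\lim_n\frac{b_n}{n}=0$ we get $\frac{b_n+1}{n+2}\to0$. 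Thus the bracketed quantity tends to $\|g\|_r$, so for all $n\ge n_0$ with $n_0$ large it is at least $\frac12\|g\|_r$, giving $\|\mathcal{F}_{n}(f)-f\|_r\ge\frac{b_n+1}{n+2}\cdot\frac12\|g\|_r$. I would then set $P_r(f)=\frac12\|g\|_r=\frac12\|f'+e_1 f''\|_r$, which depends only on $f$ and $r$.

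The one substantive point, and the only place the hypothesis that $f$ is not a polynomial of degree $\le0$ is used, is that $P_r(f)>0$, i.e.\ that $g=(e_1 f')'$ is not identically zero on $\overline{\mathbb{D}}_r$. If $g\equiv0$ then $zf'(z)$ is constant; evaluating at $z=0$ forces that constant to be $0$, whence $f'\equiv0$ and $f$ is constant, contradicting the hypothesis. Since $g$ is analytic and not identically zero, $\|g\|_r=\sup_{|z|\le r}|g(z)|>0$. I do not expect a single hard estimate to be the obstacle; rather the care lies in the bookkeeping of orders, namely checking that the correction $\frac{1}{n+2}\|h\|_r$ and the remainder $L_{r,A}(f)\frac{(b_n+1)^2}{(n+2)^2}$ are both of strictly smaller order than the leading $\frac{b_n}{n+2}\|g\|_r$ — which is precisely what $b_n\to\infty$ and $b_n/n\to0$ guarantee — so that they may be absorbed for $n\ge n_0$ without destroying positivity. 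Together with the upper estimate of Theorem \ref{t1}, this yields the equivalence $\|\mathcal{F}_{n}(f)-f\|_r\sim\frac{b_n+1}{n+2}$.
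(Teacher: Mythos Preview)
Your argument is correct and is in spirit the same Voronovskaja-plus-reverse-triangle strategy as the paper, but you organize the main step differently. The paper keeps the full $b_n$-dependent Voronovskaja bracket
\[
V_n(z)=\Big(1-\tfrac{2z}{b_n}\Big)f'(z)+z\Big(1-\tfrac{z}{2b_n}\Big)f''(z)
\]
intact, bounds $\|\mathcal{F}_n(f)-f-\frac{b_n}{n+2}V_n\|_r$ by Theorem~\ref{t2}, and then argues by a power-series identity that $\|V_n\|_r>0$ whenever $f$ is nonconstant; afterwards it patches in the finitely many small $n$ by the separate observation that $\mathcal{F}_n(f)\neq f$ for nonconstant $f$. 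You instead split $V_n$ into the $n$-independent leading piece $g=f'+e_1f''=(e_1f')'$ and a lower-order correction $-\frac{1}{b_n}h$, and then absorb both $\frac{1}{n+2}\|h\|_r$ and the Theorem~\ref{t2} remainder into the leading term using $b_n\to\infty$ and $b_n/n\to0$. Your nonvanishing argument ($g\equiv0\Rightarrow e_1f'$ constant $\Rightarrow f'\equiv0$) is shorter than the paper's coefficient-matching, and your resulting constant $P_r(f)=\tfrac12\|f'+e_1f''\|_r$ is manifestly independent of $n$, which is a mild gain in clarity over the paper's formulation where the displayed norm still carries $b_n$. The trade-off is that the paper's version also covers a prescribed initial range $n_0\le n<n^\ast$ via the ad hoc constants $L_{r,n}(f)$, whereas you simply take $n_0$ large; for the theorem as stated either reading is acceptable.
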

\begin{proof}
For all $|z|\leq r,$ and $n\in \mathbb{N},$ we can write the following equality
\begin{eqnarray*}
\mathcal{F}_{n}(f;z)-f(z)&=&\frac{b_n+1}{n+2}\bigg\{\bigg(z\left(1-\frac{z}{2b_n}\right) f^{\prime\prime}(z)+ \left(1-\frac{2z}{b_n}\right)f^{\prime}(z)\bigg)\\&&+\frac{b_n+1}{n+2}\bigg(\frac{(n+2)^2}{(b_n+1)^2}\bigg(\mathcal{F}_{n}(f;z)-f(z)
-\frac{b_n+1}{n+2}\left(\left(1-\frac{2z}{b_n}\right)f^{\prime}(z)+z\left(1-\frac{z}{2b_n}\right)f^{\prime\prime}(z)\right)\bigg)\bigg)\bigg\}.
\end{eqnarray*}
By applying the property
\begin{eqnarray*}
\parallel F+G\parallel_r\geq |\parallel F\parallel_r-\parallel G\parallel_r|\geq \parallel F\parallel_r-\parallel G\parallel_r,
\end{eqnarray*}
it follows that
\begin{eqnarray*}
\left\|\mathcal{F}_{n}(f;.)-f\right\|_r &\geq& \frac{b_n+1}{n+2}\bigg\{\bigg\|\left(e_1 \left(1-\dfrac{e_1}{2b_n}\right) f^{\prime\prime}+\left(1-\dfrac{2e_1}{b_n}\right)f^{\prime}\right)\bigg\|_r\\&&-\frac{b_n+1}{n+2}\left(\frac{(n+2)^2}{(b_n+1)^2}\bigg\| (\mathcal{F}_{n}(f;.)-f-\frac{b_n+1}{n+2}\left(e_1 \left(1-\dfrac{e_1}{2b_n}\right) f^{\prime\prime}+\left(1-\dfrac{2e_1}{b_n}\right)f^{\prime}\right)\bigg\|_r \right)\bigg\}.
\end{eqnarray*}
Taking into account that by hypothesis $f$ is not a polynomial of degree $\leq 0$ in $\mathbb{D}_R,$ we get
$$\left\|\left(e_1 \left(1-\dfrac{e_1}{2b_n}\right) f^{\prime\prime}+\left(1-\dfrac{2e_1}{b_n}\right)f^{\prime}\right)\right\|_r>0.$$
Indeed, supposing the contrary, it follows that
\begin{eqnarray}\label{e1}
z\left(1-\frac{z}{2b_n}\right) f^{\prime\prime}(z)+ \left(1-\frac{2z}{b_n}\right)f^{\prime}(z)=0 \,\,\, \mbox{for all}\,\, |z|\leq r.
\end{eqnarray}
Let us take $f(z)=\displaystyle\sum_{p=0}^\infty c_p z^p ,$ where $c_p, 0\leq p<\infty,$ are certain constants.\\
Then $$f^{\prime}(z)=\sum_{p=1}^\infty p c_p z^{p-1},\,\, f^{\prime\prime}(z)=\sum_{p=2}^\infty p (p-1) c_p z^{p-2}.$$
By substituting these values in (\ref{e1}), we obtain
\begin{eqnarray*}
\left(1-\frac{z}{2b_n}\right)\sum_{p=2}^\infty p (p-1) c_p z^{p-1}+\left(1-\frac{2z}{b_n}\right)\sum_{p=1}^\infty p c_p z^{p-1}=0,
\end{eqnarray*}
or
\begin{eqnarray*}
c_1+\left(4c_2-\frac{2}{b_n}c_1\right)z+\sum_{p=2}^{\infty}\left((p+1)^2c_{p+1}-\frac{p(p+3)}{2b_n}c_p\right)z^p=0,\,\, |z|\leq r.
\end{eqnarray*}
From the above series, we easily get $c_p=0,\,\, \forall p\in \mathbb{N}$ and $f(z)=c_0,$ a contradiction to hypothesis.\\
Now, from Theorem \ref{t2}, we have
\begin{eqnarray*}
\frac{(n+2)^2}{(b_n+1)^2}\bigg\|(\mathcal{F}_{n}(f;.)-f-\frac{b_n+1}{n+2}\left(e_1 \left(1-\dfrac{e_1}{2b_n}\right) f^{\prime\prime}+\left(1-\dfrac{2e_1}{b_n}\right)f^{\prime}\right)\bigg\|_r \leq L_{r,A}(f),\,\, \forall\,\, n\geq n_0,\, n_0\in \mathbb{N}.
\end{eqnarray*}
Therefore there exists an index $n^*> n_0$ depending only on $f,r$ such that $n\geq n^*$ we have\\
\noindent $\left\|e_1 \left(1-\dfrac{e_1}{2b_n}\right) f^{\prime\prime}+\left(1-\dfrac{2e_1}{b_n}\right)f^{\prime}\right\|_r$
\begin{eqnarray*}
&&-\frac{b_n+1}{n+2}\left(\frac{(n+2)^2}{(b_n+1)^2}
\left\|\mathcal{F}_{n}(f;.)-f-\frac{b_n+1}{n+2}\left(e_1 \left(1-\dfrac{e_1}{2b_n}\right) f^{\prime\prime}+\left(1-\dfrac{2e_1}{b_n}\right)f^{\prime}\right)\right\|_r\right)\\
&&\geq \frac{1}{2}\left\Vert \left(e_1 \left(1-\dfrac{e_1}{2b_n}\right) f^{\prime\prime}+\left(1-\frac{2z}{b_n}\right)f^{\prime}\right)\right\Vert_r,
\end{eqnarray*}
which immediately implies that
\begin{eqnarray*}
\left\| \mathcal{F}_{n}(f;.)-f\right\|_r \geq \frac{b_n+1}{2(n+2)}\left\Vert e_1 \left(1-\dfrac{e_1}{2b_n}\right) f^{\prime\prime}+\left(1-\dfrac{2e_1}{b_n}\right)f^{\prime}\right\Vert_r.
\end{eqnarray*}
For $n_0\leq n< n^*,$ we have
\begin{eqnarray*}
\left\| \mathcal{F}_{n}(f;.)-f\right\|_r \geq \frac{b_n+1}{(n+2)}L_{r,n}(f)
\end{eqnarray*}
with $L_{r,n}(f)= \dfrac{n+2}{b_n+1} \left\| \mathcal{F}_{n}(f;.)-f\right\|_r>0.$
Indeed, if we would have $\left\| \mathcal{F}_{n}(f;.)-f\right\|_r=0,$ it would follow that $\mathcal{F}_{n}(f;z)=f(z) \,\,\, \mbox{for all}\,\,\, |z|\leq r,$ which is valid only for $f,$ a constant function, contradicting the hypothesis on $f.$ Therefore, finally we get
\begin{eqnarray*}
\left\Vert \mathcal{F}_{n}(f;.)-f\right\Vert_r \geq \frac{b_n+1}{(n+2)}P_{r}(f),\,\,\, \mbox{for all}\,\,\, n\geq n_0,\,\, n_0 \in\mathbb{N},
\end{eqnarray*}
where
\begin{eqnarray*}
P_{r}(f)=\min \bigg\{L_{r,n_0}(f), L_{r,n_0+1}(f),\cdot\cdot\cdot L_{r,n^*-1}(f), \frac{1}{2}\left\|e_1 \left(1-\dfrac{e_1}{2b_n}\right) f^{\prime\prime}+\left(1-\dfrac{2e_1}{b_n}\right)f^{\prime}\right\|_r\bigg\}
\end{eqnarray*}
which completes the proof.
\end{proof}

Now, combining Theorem \ref{t1} and Theorem \ref{t3}, we immediately get the following:

\begin{corollary}
In the hypothesis of Theorem \ref{t2}, if $f$ is not a polynomial of degree $\leq 0,$  then for any $1\leq r<R,$ we have
\begin{eqnarray*}
\parallel \mathcal{F}_{n}(f;.)-f\parallel_r \sim \frac{b_n+1}{n+2}, \,\, n\geq n_0, \, n_0 \in \mathbb{N}
\end{eqnarray*}
holds, where the constant in the equivalence $\sim$ depends on $f,r.$
\end{corollary}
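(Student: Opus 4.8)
The plan is to obtain the equivalence $\sim$ as a two-sided estimate: I will bound $\|\mathcal{F}_{n}(f;\cdot)-f\|_r$ from above and from below by a constant multiple of $\frac{b_n+1}{n+2}$, the constants depending only on $f$ and $r$, so that the conclusion is exactly the definition of $\sim$. Both one-sided bounds are already available, so the real work has been done in the preceding theorems and only their juxtaposition remains.

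First I would read off the upper bound from Theorem \ref{t1}. Taking the supremum over $|z|\le r$ in the pointwise estimate proved there gives
$$\|\mathcal{F}_{n}(f;\cdot)-f\|_r\le C_{r,A}(f)\,\frac{b_n+1}{n+2},\qquad n\ge n_0,$$
with $C_{r,A}(f)=M\sum_{p=1}^{\infty}(Ar)^p<\infty$. Next I would invoke Theorem \ref{t3}, whose hypotheses coincide with those of the corollary (in particular, $f$ is not a polynomial of degree $\le 0$), to obtain the matching lower bound
$$\|\mathcal{F}_{n}(f;\cdot)-f\|_r\ge P_r(f)\,\frac{b_n+1}{n+2},\qquad n\ge n_0,$$
with $P_r(f)>0$ depending only on $f$ and $r$. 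Combining the two inequalities, for every $n\ge n_0$ I get
$$P_r(f)\,\frac{b_n+1}{n+2}\le\|\mathcal{F}_{n}(f;\cdot)-f\|_r\le C_{r,A}(f)\,\frac{b_n+1}{n+2},$$
which is precisely the relation $\|\mathcal{F}_{n}(f;\cdot)-f\|_r\sim\frac{b_n+1}{n+2}$ with equivalence constants $P_r(f)$ and $C_{r,A}(f)$, both depending only on $f$ and $r$.

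There is no substantive obstacle here; the sandwich step is immediate once both estimates are in hand, so the entire content of the corollary is carried by Theorems \ref{t1} and \ref{t3}. The only point I would verify is the compatibility of the two ranges of $r$: the lower estimate of Theorem \ref{t3} is valid on the whole interval $1\le r<R$, whereas the finiteness of $C_{r,A}(f)$ in Theorem \ref{t1} forces $Ar<1$, that is $r<\frac1A$. Since $A\in(\frac1R,1)$ yields $\frac1A<R$, both bounds hold simultaneously on $1\le r<\frac1A$, and it is on this common range that the two-sided estimate produces the stated equivalence; the positivity of the lower constant $P_r(f)$, already secured in Theorem \ref{t3} by the non-constancy of $f$, is what guarantees that the order $\frac{b_n+1}{n+2}$ is exact rather than merely an upper bound.
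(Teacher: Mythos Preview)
Your proposal is correct and matches the paper's approach exactly: the corollary is stated there simply as an immediate consequence of combining Theorem~\ref{t1} (upper bound) with Theorem~\ref{t3} (lower bound), with no additional argument given. Your remark about the range $1\le r<\frac{1}{A}$ needed for the finiteness of $C_{r,A}(f)$ is a careful observation the paper glosses over.
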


\subsection{Simultaneous approximation}
Concerning the derivatives of complex Sz\'{a}sz-Durrmeyer-Chlodowsky operators, we can prove the following results:
\begin{thm}\label{t4}
In the hypothesis of Theorem \ref{t2}, let $1\leq r< r_1<R$ and $p\in \mathbb{N},$ then for all $|z|\leq r$ and $n\geq n_0,\, n_0\in \mathbb{N},$ we have
\begin{eqnarray*}
|\mathcal{F}_{n}^{(p)}(f;z)-f^{(p)}(z)| \leq \frac{(b_n+1)\, C_{r_1,A}(f)\,\, p! \,\,r_1}{(n+2)(r_1-r)^{p+1}},
\end{eqnarray*}
where $C_{r_1,A}(f)$ is defined as in Theorem \ref{t1}.
\end{thm}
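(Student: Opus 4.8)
The plan is to reduce the estimate for the $p$-th derivative to the zero-order estimate already established in Theorem \ref{t1}, by means of the Cauchy integral formula for derivatives. Set $g_n(v):=\mathcal{F}_{n}(f;v)-f(v)$; since both $\mathcal{F}_{n}(f;\cdot)$ and $f$ are analytic in $\mathbb{D}_R$, so is $g_n$ there, and for any fixed $z$ with $|z|\leq r$ I would write
\begin{eqnarray*}
g_n^{(p)}(z)=\frac{p!}{2\pi i}\int_{\gamma}\frac{g_n(v)}{(v-z)^{p+1}}\,dv,
\end{eqnarray*}
where $\gamma$ is the circle $|v|=r_1$ traversed once counterclockwise, which encloses $z$ because $r<r_1$. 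Note that $g_n^{(p)}(z)=\mathcal{F}_{n}^{(p)}(f;z)-f^{(p)}(z)$, so this contour representation is exactly the quantity to be bounded.

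Next I would bound the integrand by the standard $ML$-estimate. For $|z|\leq r$ and $|v|=r_1$ one has $|v-z|\geq r_1-r>0$, hence $|v-z|^{-(p+1)}\leq(r_1-r)^{-(p+1)}$. Using $\|g_n\|_{r_1}=\|\mathcal{F}_{n}(f;\cdot)-f\|_{r_1}$ as the sup-bound of the numerator along $\gamma$ and the length $2\pi r_1$ of $\gamma$, I obtain
\begin{eqnarray*}
|g_n^{(p)}(z)|\leq\frac{p!}{2\pi}\cdot\frac{\|\mathcal{F}_{n}(f;\cdot)-f\|_{r_1}}{(r_1-r)^{p+1}}\cdot 2\pi r_1=\frac{p!\,r_1}{(r_1-r)^{p+1}}\,\|\mathcal{F}_{n}(f;\cdot)-f\|_{r_1}.
\end{eqnarray*}

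Finally, I would invoke Theorem \ref{t1} with the radius $r_1$ in place of $r$. The hypotheses on $f$ and on the coefficients $c_p$ are unchanged, and taking $1\leq r_1<\frac{1}{A}$ keeps $C_{r_1,A}(f)=M\sum_{p=1}^{\infty}(Ar_1)^p$ finite, so Theorem \ref{t1} yields $\|\mathcal{F}_{n}(f;\cdot)-f\|_{r_1}\leq C_{r_1,A}(f)\,\frac{b_n+1}{n+2}$ for all $n\geq n_0$. Substituting this into the previous display produces exactly the claimed bound $\frac{(b_n+1)\,C_{r_1,A}(f)\,p!\,r_1}{(n+2)(r_1-r)^{p+1}}$, completing the proof. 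The argument is essentially routine; the only point requiring care is the choice of the intermediate radius $r_1$, which must still lie in the range where Theorem \ref{t1} applies (so that $C_{r_1,A}(f)$ converges). Enlarging the radius from $r$ to $r_1$ is precisely the device that keeps the numerator controlled by Theorem \ref{t1} while the gap $r_1-r>0$ renders the factor $(r_1-r)^{-(p+1)}$ finite; balancing these two competing effects is the heart of the simultaneous-approximation estimate.
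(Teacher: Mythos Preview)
Your proof is correct and follows essentially the same approach as the paper: apply Cauchy's integral formula for derivatives over the circle of radius $r_1$, use the estimate $|v-z|\geq r_1-r$ together with the length $2\pi r_1$, and then invoke Theorem~\ref{t1} at radius $r_1$. Your write-up is in fact more explicit than the paper's, which compresses the $ML$-estimate step into a single line.
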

\begin{proof}
Denoting by $\Gamma,$ the circle of radius $r_1 >1$ and center $0,$ since for any $|z|\leq r$ and $\nu \in \Gamma$ we have $|\nu-z|\geq r_1-r,$ by Cauchy's formula, it follows that for all $n\in \mathbb{N},$ we get
\begin{eqnarray*}
|\mathcal{F}_{n}^{(p)}(f;z)-f^{(p)}(z)|&=&\frac{p!}{2 \pi}\left|\int_{\Gamma}\frac{\mathcal{F}_{n}(f;\nu)-f(\nu)}{(\nu-z)^{p+1}}d\nu\right|.
\end{eqnarray*}
For all $\nu\in \Gamma$ and $n\in \mathbb{N}$ with $n\geq n_0, \, n_0\in \mathbb{N},$ we get
\begin{eqnarray*}
|\mathcal{F}_{n}^{(p)}(f;z)-f^{(p)}(z)| &\leq& \frac{p! \,r_1\, (b_n+1)\, C_{r_1,A}(f)}{(n+2)\,(r_1-r)^{p+1}},
\end{eqnarray*}
which proves the theorem.
\end{proof}


\begin{thm}\label{t5}
In the hypothesis of Theorem \ref{t2}, let $1\leq r< r_1<R$ and $f$ be not a polynomial of degree $\leq p-1,\, (p\geq 1)$ then we have
\begin{eqnarray*}
\parallel \mathcal{F}_{n}^{(p)}(f;.)-f^{(p)}\parallel_r  \sim \frac{b_n+1}{n+2}, \,\, \mbox{for all}\,\,n\geq n_0,\, n_0\in \mathbb{N},
\end{eqnarray*}
where the constant in the equivalence $\sim$ depends only on $f,r,r_1,p.$
\end{thm}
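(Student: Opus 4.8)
The plan is to split the claimed equivalence into its two one-sided halves. The upper estimate $\|\mathcal{F}_n^{(p)}(f;\cdot)-f^{(p)}\|_r\le C\,\frac{b_n+1}{n+2}$ is nothing but Theorem~\ref{t4} applied to the fixed pair $r<r_1<R$, so all the work lies in the matching lower estimate. To produce it I would first recall the exact identity already isolated in the proof of Theorem~\ref{t3}, namely
\[
\mathcal{F}_n(f;\nu)-f(\nu)=\frac{b_n+1}{n+2}\Big[G(\nu)+\frac{b_n+1}{n+2}\,T_n(\nu)\Big],
\]
where $G(\nu)=\nu\big(1-\tfrac{\nu}{2b_n}\big)f''(\nu)+\big(1-\tfrac{2\nu}{b_n}\big)f'(\nu)$ and $T_n$ is the normalised remainder which, by Theorem~\ref{t2}, obeys $\|T_n\|_{r_1}\le L_{r_1,A}(f)$ for all $n\ge n_0$.

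The second step is to differentiate $p$ times via Cauchy's formula on the circle $\Gamma$ of radius $r_1$ centred at the origin, exactly as in Theorem~\ref{t4}. This gives, for every $|z|\le r$,
\[
\mathcal{F}_n^{(p)}(f;z)-f^{(p)}(z)=\frac{b_n+1}{n+2}\,G^{(p)}(z)+\Big(\frac{b_n+1}{n+2}\Big)^2\frac{p!}{2\pi i}\int_{\Gamma}\frac{T_n(\nu)}{(\nu-z)^{p+1}}\,d\nu .
\]
Since $|\nu-z|\ge r_1-r$ on $\Gamma$, the last integral is bounded in modulus by $\frac{p!\,r_1\,L_{r_1,A}(f)}{(r_1-r)^{p+1}}$ uniformly in $n$, so the reverse triangle inequality $\|F+H\|_r\ge\|F\|_r-\|H\|_r$ yields
\[
\|\mathcal{F}_n^{(p)}(f;\cdot)-f^{(p)}\|_r\ge\frac{b_n+1}{n+2}\Big(\|G^{(p)}\|_r-\frac{b_n+1}{n+2}\cdot\frac{p!\,r_1\,L_{r_1,A}(f)}{(r_1-r)^{p+1}}\Big).
\]
Because $b_n/n\to0$ forces $\frac{b_n+1}{n+2}\to0$, the subtracted term is eventually dominated by $\tfrac12\|G^{(p)}\|_r$, and for such $n$ the bracket is at least $\tfrac12\|G^{(p)}\|_r$; the finitely many indices $n_0\le n<n^{*}$ are then absorbed by passing to a minimum, precisely as at the close of Theorem~\ref{t3}.

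Everything therefore reduces to showing that $\|G^{(p)}\|_r$ is positive and, more importantly, bounded below by a constant independent of $n$, and this is the step I expect to be the main obstacle. That $G^{(p)}\not\equiv0$ I would prove by contradiction along the lines of Theorem~\ref{t3}: were $G^{(p)}\equiv0$, then $G$ would be a polynomial of degree $\le p-1$, and the power-series computation of Theorem~\ref{t3} exhibits the coefficient of $z^{j}$ in $G$ as $(j+1)^2c_{j+1}-\frac{j(j+3)}{2b_n}c_j$; its vanishing for all $j\ge p$ forces the recurrence $c_{j+1}=\frac{j(j+3)}{2b_n(j+1)^2}c_j$, whose nonzero solutions decay only geometrically in $j$ and hence contradict the hypothesis $|c_j|\le MA^{j}/(2j)!$ unless $c_j=0$ for all $j\ge p$, i.e. unless $f$ is a polynomial of the excluded degree. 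The genuinely delicate point is the uniformity in $n$: since $G$ itself depends on $n$ through $b_n$, a bound $\|G^{(p)}\|_r>0$ valid only for each fixed $n$ could deteriorate as $n\to\infty$. I would resolve this by observing that the $\tfrac1{b_n}$-terms of $G$ are negligible, so that $G^{(p)}\to\big(zf''(z)+f'(z)\big)^{(p)}$ uniformly on $\overline{\mathbb{D}}_r$, whence $\|G^{(p)}\|_r\ge\tfrac12\big\|\big(zf''+f'\big)^{(p)}\big\|_r>0$ for all large $n$; here the positivity of the limiting norm follows once more from the power-series argument, since $\big(zf''+f'\big)^{(p)}=\big(zf'\big)^{(p+1)}$ vanishes identically only for $f$ in the excluded class.
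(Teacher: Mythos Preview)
Your architecture coincides with the paper's: the upper bound is Theorem~\ref{t4}, and the lower bound is obtained by differentiating the Voronovskaja decomposition $p$ times via Cauchy's formula on the circle $|\nu|=r_1$, applying the reverse triangle inequality, and invoking Theorem~\ref{t2} to control the normalised remainder, so that everything reduces to $\|G^{(p)}\|_r>0$. The only methodological difference is in that last step: the paper recasts $G=Q_{p-1}$ as a first-order linear ODE for $y=f'$, separates real and imaginary parts, and shows the general solution is a polynomial of degree $\le p-1$; your coefficient recurrence $(j+1)^2c_{j+1}=\frac{j(j+3)}{2b_n}c_j$ together with the growth hypothesis on $c_j$ reaches the same conclusion more directly and is a legitimate alternative.

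You go further than the paper by explicitly flagging that $G$ depends on $n$ through $b_n$, so that positivity of $\|G^{(p)}\|_r$ for each fixed $n$ does not automatically give a uniform lower bound; the paper is silent on this point. Your proposed resolution via the limit $G^{(p)}\to(zf''+f')^{(p)}=(zf')^{(p+1)}$ is natural, but it has a gap precisely at the boundary of the hypothesis: if $f$ is a polynomial of degree exactly $p$ (which is \emph{not} excluded by ``$f$ is not a polynomial of degree $\le p-1$''), then $(zf')^{(p+1)}\equiv 0$ and your uniform lower bound collapses. Concretely, for $f=e_p$ one finds $G^{(p)}=-\tfrac{p(p+3)}{2b_n}\,p!$, which is nonzero for each $n$ but tends to $0$; and since the leading coefficient of $\Pi_{n,p}$ is $n^p/\prod_{j=2}^{p+1}(n+j)$, a direct computation gives $\|\mathcal F_n^{(p)}(e_p;\cdot)-e_p^{(p)}\|_r\sim C_p/n$, whereas $(b_n+1)/(n+2)\sim b_n/n$. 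Thus for $f=e_p$ the claimed equivalence actually fails, and this is a defect of the statement that neither the paper's proof nor yours can repair without strengthening the hypothesis to exclude polynomials of degree $\le p$.
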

\begin{proof}
Let $\Gamma$ be a circle of radius $r_1>r\geq 1$ and center $0,$ we have\\
\noindent $\left\| \mathcal{F}_{n}^{(p)}(f;.)-f^{(p)}\right\|_r$
\begin{eqnarray*}
&=&\bigg\Vert\frac{b_n+1}{(n+2)}
\bigg\{\frac{p!}{2\pi i}\int_{\Gamma}\frac{\left[\nu\left(1-\dfrac{\nu}{2b_n}\right) f^{\prime \prime}(\nu)+ \left(1-\dfrac{2\nu}{b_n}\right)f^{\prime}(\nu)\right]}{(\nu-z)^{p+1}}d\nu
\\&&+\frac{b_n+1}{(n+2)}\frac{p!}{2\pi i}\int_{\Gamma}\frac{(n+2)^2}{(b_n+1)^2}\frac{\left[ \mathcal{F}_{n}(f;\nu)-f(\nu)-\dfrac{b_n+1}{(n+2)}\left(\left(1-\dfrac{2\nu}{b_n}\right)f^{\prime}(\nu)+\nu\left(1-\dfrac{\nu}{2b_n}\right) f^{\prime\prime}(\nu)\right)\right]}{(\nu-z)^{p+1}}d\nu\bigg\}\bigg\Vert_r\\
&\geq& \frac{b_n+1}{(n+2)}\bigg\{\bigg\Vert \bigg(\frac{e_1 \left(1-\dfrac{e_1}{2b_n}\right) f^{\prime \prime}+ \left(1-\dfrac{2e_1}{b_n}\right)f^{\prime}}{2}\bigg)^{(p)}\bigg\Vert_r-\dfrac{b_n+1}{(n+2)}\bigg\| \frac{p!}{2\pi}\\&&\times\frac{(n+2)^2}{(b_n+1)^2}\int_{\Gamma}\frac{\bigg(\mathcal{F}_{n}(f;\nu)-f(\nu)-\dfrac{b_n+1}{(n+2)}\left(e_1 \left(1-\dfrac{e_1}{2b_n}\right) f^{\prime \prime}+\left(1-\dfrac{2e_1}{b_n}\right)f^{\prime}\right)\bigg)}{(\nu-z)^{p+1}}d\nu\bigg\|_r \bigg\}.
\end{eqnarray*}
Now, applying Theorem \ref{t2}
\begin{eqnarray*}
\bigg\|\frac{p!}{2\pi}\frac{(n+2)^2}{(b_n+1)^2}\int_{\Gamma}\frac{\bigg(\mathcal{F}_{n}(f;\nu)-f(\nu)-\dfrac{b_n+1}{(n+2)}\left(e_1 \left(1-\dfrac{e_1}{2b_n}\right) f^{\prime \prime}+\left(1-\dfrac{2e_1}{b_n}\right)f^{\prime}\right)\bigg)}{(\nu-z)^{p+1}}d\nu\bigg\|_r\bigg\}
\end{eqnarray*}
\begin{eqnarray*}
&\leq& \frac{p!}{2\pi } \frac{2\pi r_1 \,(n+2)^2}{(b_n+1)^2\,(r_1-r)^{p+1}}\bigg\| \mathcal{F}_{n}(f; \cdot)-f-\frac{b_n+1}{(n+2)}\left(\left(1-\dfrac{2e_1}{b_n}\right)f^{\prime}+e_1 \left(1-\dfrac{e_1}{2b_n}\right)f^{\prime\prime}\right)\bigg\|_{r_1}\\
&\leq&\frac{p! \,\,r_1 \,\,L_{r_1,A}(f)}{(r_1-r)^{p+1}},
\end{eqnarray*}
but by hypothesis on $f,$ we have
$\bigg\|\left(\left(1-\dfrac{2e_1}{b_n}\right)f^{\prime}+ e_1 \left(1-\dfrac{e_1}{2b_n}\right)f^{\prime\prime}\right)^{(p)}\bigg\|_r > 0.$
Indeed if we suppose the contrary that
\begin{eqnarray*}
\bigg\|\left(\left(1-\dfrac{2e_1}{b_n}\right)f^{\prime}+ e_1 \left(1-\dfrac{e_1}{2b_n}\right)f^{\prime\prime}\right)^{(p)}\bigg\|_r =0,
\end{eqnarray*}
then
\begin{eqnarray*}
\left(1-\dfrac{2z}{b_n}\right)f^{\prime}(z)+z \left(1-\dfrac{z}{2b_n}\right)f^{\prime\prime}(z)= Q_{p-1}(z),
\end{eqnarray*}
where $Q_{p-1}(z)$ is a polynomial of degree $\leq p-1,$ thus $f$ satisfies the differential equation
\begin{eqnarray*}
\left(1-\dfrac{2z}{b_n}\right)f^{\prime}(z)+z \left(1-\dfrac{z}{2b_n}\right)f^{\prime\prime}(z)= Q_{p-1}(z), \,\, \forall \,\, |z|\leq r.
\end{eqnarray*}
Now, denoting $f^{\prime}(z)=y(z),$ the above differential equation reduces to
\begin{eqnarray*}
\left(1-\dfrac{2z}{b_n}\right)y(z)+z \left(1-\dfrac{z}{2b_n}\right)y^{\prime}(z)= Q_{p-1}(z), \,\, \forall \,\, |z|\leq r.
\end{eqnarray*}
In what follows, let us define $y(x)=y_1(x)+iy_2(x),$ where $y_1(x)$ and $y_2(x)$ are real functions of the real variable and $i^2=-1$. The functions $y_j(x), j=1,2$ satisfy the differential equations
\begin{eqnarray}\label{m1}
\left(1-\dfrac{2x}{b_n}\right)y_j(x)+x \left(1-\dfrac{x}{2b_n}\right)y_j^{\prime}(x)= Q_{p-1}(x), \,\, \forall \,\, x\in [-1,1],\, \, j=1,2,
\end{eqnarray}
which is a non-homogeneous differential equation. By a similar reasoning as in the proof of Theorem \ref{t3}, the unique solution of homogeneous differential equation corresponding to equation (\ref{m1}) is $y_j(x)=0, \, \forall x\in [-1,1].$ To find the particular solution of non-homogeneous differential equation (\ref{m1}) of the form $y_j(x)=\displaystyle\sum_{k=0}^{p-1} c_k x^k$ with $c_k\in \mathbb{R},$ by simple calculations we easily obtain that
\begin{eqnarray*}
\left(1-\frac{2x}{b_n}\right)\sum_{k=0}^{p-1} c_k x^k+\left(1-\frac{x}{2b_n}\right)\sum_{k=1}^{p-1} k c_k x^k = Q_{p-1}(x)=\sum_{k=0}^{p-1} d_k x^k\,\, \mbox{(say)},
\end{eqnarray*}
which implies that
$$c_0=d_0,\, (k+1)c_k-\left(\dfrac{k-1}{2b_n}+\dfrac{2}{b_n}\right)c_{k-1}=d_k, \,\, k\in \{1,2,\dotsb p-1\}.$$
Thus, $c_k$'s can be uniquely determined.
Hence, it follows that $y_1(x)$ and $y_2(x)$ are polynomials of degree $\leq p-1$ in $x.$ Now, because $y(z)$ is the analytic continuation of $y(x),$ from the identity theorem on analytic functions, it follows that $y(z)$ is a polynomial of degree $\leq \,p-1$ in $z,$ a contradiction to the hypothesis.\\ So,
$$\left\| \left(e_1 \left(1-\dfrac{e_1}{2b_n}\right) f^{\prime\prime}+ \left(1-\dfrac{2e_1}{b_n}\right) f^{\prime}\right)^{(p)}\right\|_r>0.$$
In continuation, reasoning exactly as in the proof of Theorem \ref{t3} and using Theorem \ref{t4}, we get the desired conclusion.
\end{proof}

{\bf Acknowledgements}
The first author is thankful to the "Council of Scientific and Industrial Research" (Grant code: 09/143(0836)/2013-EMR-1) India for financial support to carry out the above research work.

\end{document}